\theoremstyle{plain}
\newtheorem{theorem}{Theorem}
\newtheorem{lemma}[]{Lemma}
\theoremstyle{definition}
\theoremstyle{remark}
\newtheorem{remark}{Remark}
\newcommand{\R}{\mathbb{R}}
\newcommand{\ind}[1]{\mathbf{1}_{\left\{#1\right\}}}
\DeclareMathOperator{\E}{\mathbf{E}}
\renewcommand{\P}{\mathbf{P}}
\renewcommand{\epsilon}{\varepsilon}
\renewcommand{\phi}{\varphi}
\newcommand \listoftodos{\section*{Todo list}
  \@starttoc{tdo}} \newcommand\l@todo[2] {\par\noindent
  \textit{#2}, \parbox{10cm}{#1}\par} \makeatother
\newcommand{\sgn}{\mathrm{sgn}}
\title{Tight estimates of exit and containment probabilities for
  nonlinear stochastic systems}
\author{Quang-Cuong Pham\,$^{1,2}$, Bastien Mallein\,$^{3}$, Jean-Jacques Slotine\,$^{4}$}
\date{\small{
    $^1$Nanyang Technological University, Singapore\\%
    $^2$Eureka Robotics, Singapore\\%
    $^3$Universit\'e Paris 13, France\\%
    $^4$Massachusetts Institute of Technology, USA\\%
}
\vspace{0.5cm}
\today}
\begin{document}

\maketitle

\begin{abstract}
  Tight estimates on the exit/containment probabilities of stochastic
  processes are of particular importance in many control
  problems. Yet, estimating the exit/containment probabilities is
  non-trivial: even for linear systems (Ornstein-Uhlenbeck processes),
  the containment probability can be computed exactly for only some
  particular values of the system parameters. In this paper, we derive
  tight bounds on the containment probability for a class of nonlinear
  stochastic systems. The core idea is to compare the ``pull
  strength'' (how hard the deterministic part of the system dynamics
  pulls towards the origin) experienced by the nonlinear system at
  hand with that of a well-chosen process for which tight estimates of
  the containment probability are known or can be numerically obtained
  (e.g. an Ornstein-Uhlenbeck process). Specifically, the main
  technical contribution of this paper is to define a suitable
  dominance relationship between the pull strengths of two systems and
  to prove that this dominance relationship implies an order
  relationship between their containment probabilities. We also
  discuss the link with contraction theory and suggest some examples
  of applications.
\end{abstract}

\section{Introduction}

Consider a nonlinear, multi-dimensional, Stochastic Differential
Equation (SDE) of the form
\[
dX_t = f(X_t)dt + \sigma dB_t,
\]
where $f$ a smooth function and $\sigma$ a positive constant.

Given a ball of radius $R$ and a time instant $T$, the \emph{exit}
probability from the ball by time $T$ is defined
as~\cite{kushner1967stochastic}
\[
P_\mathrm{exit}:=\P(\sup_{t\leq T} \|X_t\| > R).
\]
Equivalently, one may consider the \emph{containment} probability, which
is 1-$P_\mathrm{exit}$, or, in other words
\[
P_\mathrm{cont}:=\P(\sup_{t\leq T} \|X_t\| \leq R).
\]

The exit/containment probabilities are of particular importance in
many control problems, including tracking,
filtering~\cite{kushner1967stochastic}, optical
manipulation~\cite{yan2016stochastic}, etc. The main reason is that,
in such applications, the validity of the system description by the
SDE at hand is guaranteed to be valid \emph{only} within some region
of space, for example, a ball of radius $R$ -- once the system exits
from the validity region, nothing can be said anymore about it, see
Fig.~\ref{fig:laser} for an illustration. Tight estimates of the
exit/containment probability are therefore crucial: one can then
reason on the system behavior \emph{conditioned} on the event that the
system is contained within the validity region \emph{at all time} up
to $T$. Note that, stochastic stability \emph{in the mean-square
  sense} and associated probability estimates based on
\emph{concentration inequalities}, which are more commonly found in
the literature, cannot account for such observation, as further
detailed in Section~\ref{sec:applications}.

\begin{figure}[htp]
  \centering
  \includegraphics[width=10cm]{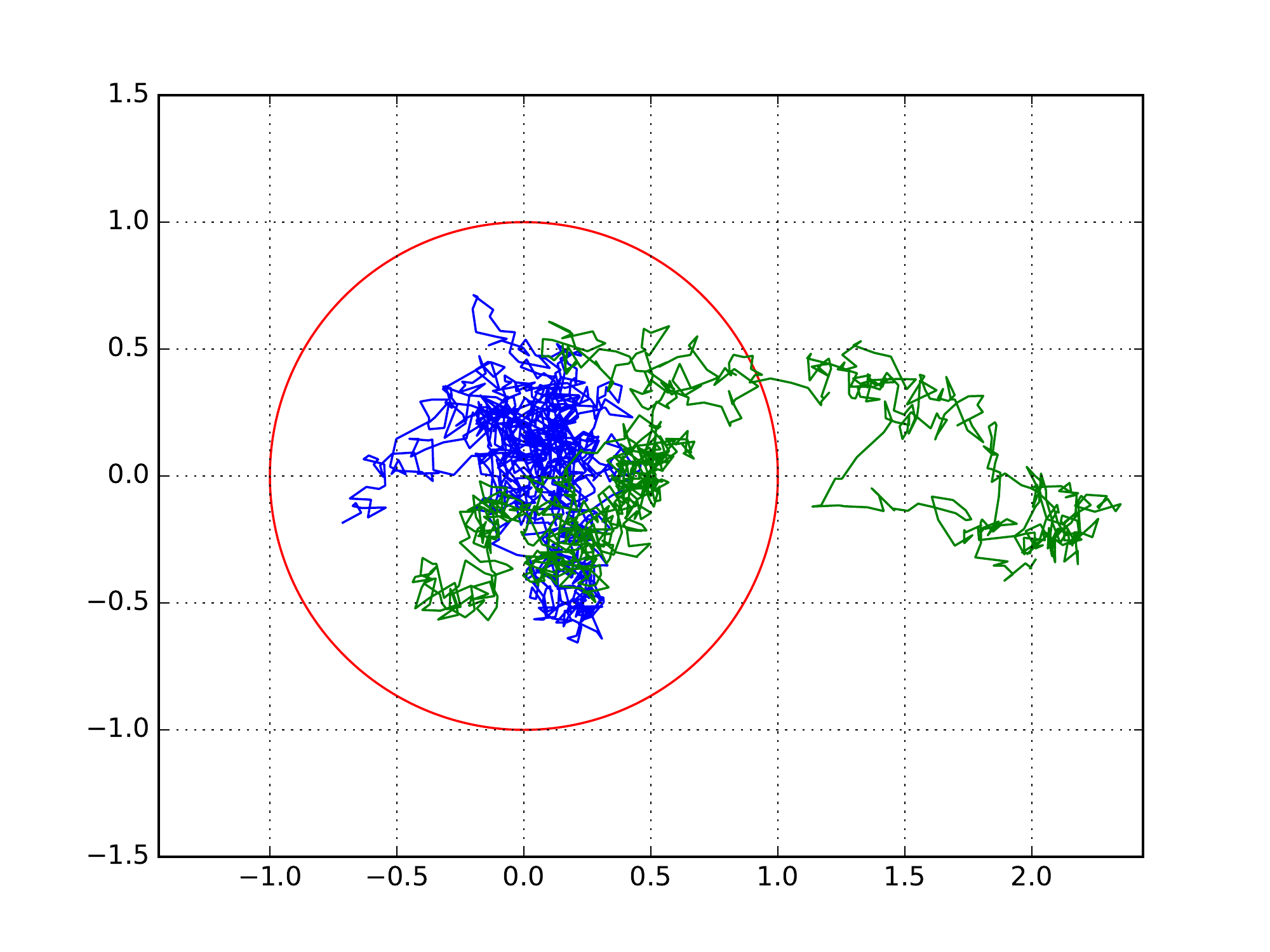}
  \caption{Simulation of a particle trapped by a laser. Within the
    laser beam (red circle), the particle is subject to two phenomena:
    (i) a trapping force that pulls the particle towards the center of
    the beam and (ii) random Brownian perturbations. Outside of the
    laser beam, the trapping force vanishes very quickly and can be
    considered as zero~\cite{yan2016stochastic}. Two sample
    trajectories for $t\in[0,T]$ are shown in blue and green. The
    green trajectory leaves the beam at some time $t<T$, after which
    one has essentially lost all control authority over it. It is
    therefore crucial, in any probability calculations, to consider
    the event that the particle is contained within the beam for all
    $t\in[0,T]$.}
  \label{fig:laser}
\end{figure}

Yet, estimating the exit/containment probability is non-trivial: even
for linear, uni-dimensional, systems of the following form (also known
as Ornstein-Uhlenbeck processes)
\[
dX_t = -\lambda X_t dt + \sigma dB_t,
\]
the containment probability can be computed exactly for only some
particular values of $\lambda, \sigma, R, T$~\cite{finch2004ornstein}.
Kushner's classic book on stochastic
control~\cite{kushner1967stochastic} provides some bounds on the
containment probability for nonlinear systems (within the topic of
``finite-time stability'') but, as we shall see in
Section~\ref{sec:loose}, those bounds are too loose for many practical
applications.

In this paper, we derive tight bounds on the containment probability
for a class of nonlinear stochastic systems. The core idea is to
compare the ``pull strength'' (how hard the deterministic part of the
system dynamics pulls towards the origin) experienced by the nonlinear
system at hand with that of a well-chosen process for which tight
estimates of the containment probability are known or can be
numerically obtained (e.g. an Ornstein-Uhlenbeck process). However, a
stronger pull everywhere does \emph{not always} imply a larger
containment probability, as made clear in
Section~\ref{sec:counter-ex}. The main technical contribution of this
paper is thus to define a suitable dominance relationship between the
pull strengths of two systems and to prove that this dominance
relationship implies an order relationship between their containment
probabilities.

The remainder of this paper is organized as follows.
Section~\ref{sec:context} provides the theoretical and practical
contexts of the problem at hand. Section~\ref{sec:main} presents the
main comparison results in dimensions $d=1$ and $d\geq 2$.
Section~\ref{sec:contraction} examines these results in the context of
contraction theory~\cite{lohmiller1998contraction}. Finally,
Section~\ref{sec:conclusion} concludes by sketching future research
directions.

\section{Theoretical and practical contexts}
\label{sec:context}

\subsection{Containment probability vs mean-square stability}
\label{sec:applications}

The stability of nonlinear stochastic systems is an active research
area with important applications ranging from observer and
controller design for nonlinear noisy
systems~\cite{karvonen2018stability}, to synchronization in networks
of noisy
oscillators~\cite{pham2009contraction,tabareau2010synchronization},
etc.

Most of the existing stochastic stability results are in the
``mean-square'' sense, typically of the
form~\cite{pham2009contraction,karvonen2018stability}
\begin{equation}
  \label{eq:l2}
  \forall t\geq 0,\ \E(\|X_t\|^2) \leq C_1 + C_2 e^{-2\lambda t},  
\end{equation}
where $C_1,C_2$ are two positive constants. While such results are
certainly useful to understand the system behavior \emph{on average},
they are not relevant when it comes to behaviors that depend on
\emph{individual} system trajectories. Consider for instance a
microscopic particle that is trapped by a laser
tweezer~\cite{ashkin1986observation} and subject to Brownian
perturbations~\cite{yan2016stochastic}. The motion of the particle is
well described by a Stochastic Differential Equation (SDE), but the
validity of that description breaks down when the particle escapes
from the laser trapping region, see Fig.~\ref{fig:laser}. Mean-square
stability results that do not account for this phenomenon will not
provide an accurate understanding of the system.

Note that it is possible to use concentration inequalities to derive,
from mean-square bounds~\eqref{eq:l2}, probabilities of the form
\[
\P\left(\|X_t\|^2 \geq (C_1 + C_2 e^{-2\lambda
    t})\beta(\delta)\right) \leq e^{-\delta},
\]
see e.g.~\cite{karvonen2018stability}. However, to be relevant, such
probabilities, as the mean-square bounds mentioned previously, must
be conditioned upon the containment event.

It is therefore crucial to consider the \emph{containment} probability
of the form
\[
  \P(\sup_{t\leq T} \|X_t\|^2 \leq R^2) \geq 1-\epsilon,
\]
where $R$ is the laser trapping radius and $\epsilon$ a small
constant. One can then reason on the particle behavior
\emph{conditioned} upon the above probability that the particle
remains confined within the trapping region \emph{at all time} until
time $T$.

Another important example concerns the analysis of the Extended Kalman
Filter (EKF)~\cite{karvonen2018stability}. The SDE describing the EKF
is obtained by linearizing the system dynamics around a reference
trajectory, and is therefore guaranteed to be valid only within some
radius $R$ around that trajectory. One thus needs to condition upon
the probability that the system trajectory remains within a radius $R$
from the reference trajectory up to some time horizon $T$. The reader
is referred to Chapter~III of~\cite{kushner1967stochastic} for an
extensive discussion of the relative merits of mean-square stability
versus exit/containment probabilities in control theory.

% \BM{I'm not sure of this last sentence. Here is how I would obtain the
%   latter equation from \eqref{eq:l2}:
%   \[
%   \bbP\left(\|\bfx(t)\|^2 \geq (C_1 + C_2 e^{-2\lambda t})e^\delta
%   \right) \leq \frac{\bbE(\|\bfx(t)\|^2)}{C_1 + C_2 e^{-2\lambda
%       t}}e^{-\delta} \leq e^{-\delta},
%   \]
%   thus I'm not sure why it is needed to assume the system description by
%   the SDE is valid in the whole state space.}

% \QCP{Je voulais dire que tout le raisonnement (pas le seul fait
%   d'utiliser les inegalites de concentration) necessite de considerer
%   que la description du systeme est valide partout, et pas que dans
%   une boule autour de 0. J'ai corrige et mis la remarque en footnote
%   plus haut.}

\subsection{Looseness of existing containment probability estimates}
\label{sec:loose}

In the literature, the main result on exit probability for nonlinear
systems was derived in Chapter~III of Kushner's
book~\cite{kushner1967stochastic}, based on a stochastic Lyapunov
analysis. However, the bound provided is too loose to be useful in
many practical applications. Consider again the laser trapping
application~\cite{yan2016stochastic}. The estimates of the containment
probability can be used to calculate the maximum velocity of the laser
beam such that the particle remains trapped with high
probability. In~\cite{yan2016stochastic}, it was shown that using
Kushner's estimates yields recommended maximum beam velocities that
are significantly lower than velocities experimentally found to be
safe.

To get a more precise idea, consider the following simple linear,
uni-dimensional, stochastic system (an Ornstein-Uhlenbeck process)
\begin{equation}
  \label{eq:ou}
  dX_t = -X_t dt + \sqrt{2}dB_t, \ X_0=0.
\end{equation}
The bound given by Kushner~\cite{kushner1967stochastic} would read
\begin{equation}
  \label{eq:bound-kushner}
  \P(\sup_{t\leq T} |X_t| \leq R) \geq e^{-\mu_K(R) T},\ \textrm{where}
\end{equation}
\begin{equation}
  \label{eq:decay-kushner}
  \mu_K(R) = \frac{2}{R^2}. 
\end{equation}

On the other hand, a direct, but more technically challenging,
analysis of system~\eqref{eq:ou}, gives the following bound~(see
\cite{finch2004ornstein})
\begin{equation}
  \label{eq:bound-direct}
  %\lim_{T \to \infty} \frac{1}{T} \ln\left(\P(\sup_{t\leq T} |X_t|
  %\leq R)\right) = -\mu_D(R),
  \P(\sup_{t\leq T} |X_t| \leq R) \sim_{T\to\infty} e^{-\mu_D(R) T},
\end{equation}
where $\mu_D(R)$ is the smallest value $\nu$ such that the
Sturm-Liouville system
\[
  \begin{cases}
    y''(x) - x y'(x) = -\nu y(x)\\
    y(-R) = y(R) =0
  \end{cases}
\]
has non-null solutions.
% \BM{Check signs, I'm not sure}
Reference~\cite{finch2004ornstein} then gives some values of
$\mu_D(R)$ for $0.7 \leq R \leq 3$, as well as the asymptotics
\begin{equation}
  \label{eq:decay-direct}
  \mu_D(R) \sim_{R\to\infty} \frac{R}{\sqrt{2\pi}}e^{-\frac{R^2}{2}}.
\end{equation}

Note that the asymptotics of \eqref{eq:bound-direct} and
\eqref{eq:decay-direct} yield very good approximations as soon as
$T\geq 5$ and $R\geq 3$.

Fig.~\ref{fig:comp} compares the decay rate $\mu_K$ given by
Kushner and the decay rate $\mu_D$ obtained by direct analysis. One
can observe that $\mu_K$ is tight for $R\leq 2$, but becomes very
loose for large values of $R$.

\begin{figure}[htp]
\centering
\includegraphics[width=10cm]{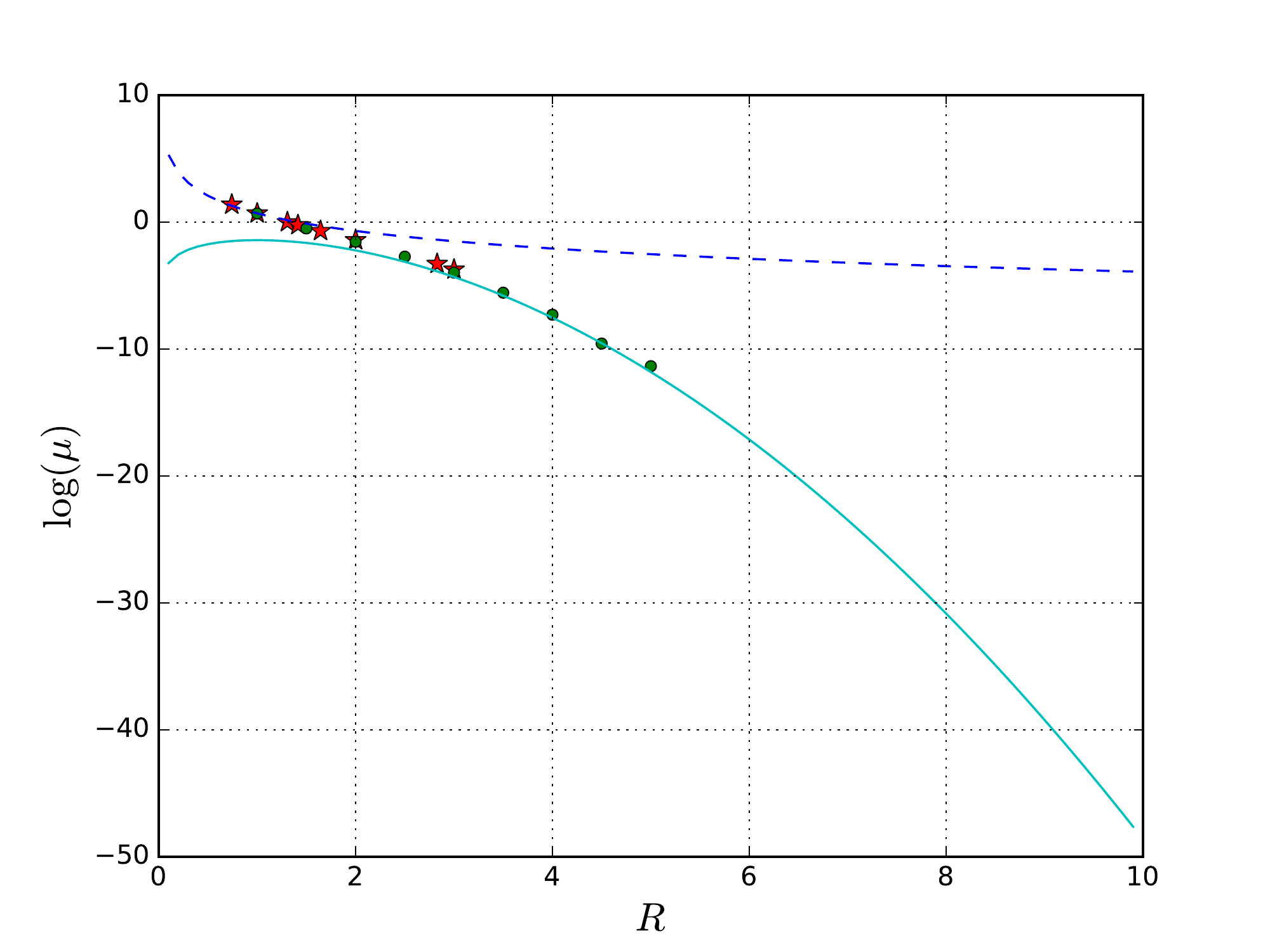}
\caption{Comparison (in log scale) of the decay rate $\mu_K$ given by
  Kushner (dashed blue) and the decay rate $\mu_D$ obtained by direct
  analysis of the Ornstein-Uhlenbeck process [red stars: exact values;
  green circles: values obtained by Monte Carlo simulations with
  $T=5$, cyan plain line: asymptotics for large $R$ given
  by~\eqref{eq:decay-direct}]. Note that the asymptotics yield very
  good approximations as soon as $R\geq 3$.}
\label{fig:comp}
\end{figure}

In~\cite{yan2016stochastic}, it was shown, in a physical laser
trapping experiment, that, in contrast with Kushner's
estimates~\eqref{eq:bound-kushner}, the direct
estimates~\eqref{eq:bound-direct} yields recommended maximum beam
velocities that agree extremely well with the velocities
experimentally found to be safe.

\subsection{A stronger pull does not always imply a larger containment
  probability}
\label{sec:counter-ex}

Consider two uni-dimensional systems (one can get rid of $\sigma$ by
adequate normalization)
\begin{eqnarray*}
dX_t &=& f(X_t)dt + dB_t,\\
dY_t &=& g(Y_t)dt + dB'_t.
\end{eqnarray*}

Suppose that, \emph{everywhere}, $X_t$ experiences a stronger pull
than $Y_t$ towards zero, that is,
\begin{equation}
  \label{eq:assum-orig}
  \forall x\in\mathbb{R},\ \sgn(x)f(x) \leq \sgn(x) g(x),  
\end{equation}
where the sign function defined by
\begin{eqnarray}
  \label{eq:sgn}
  \mathrm{if}\ x>0,&\ & \sgn(x) := 1,\nonumber\\  
  \mathrm{if}\ x=0,&\ & \sgn(x) := 0,\nonumber\\  
  \mathrm{if}\ x<0,&\ & \sgn(x) := -1\nonumber.  
\end{eqnarray}

Then, one would like to say that, for all $T, R$,
\begin{equation}
  \label{eq:comp}
  \P(\sup_{t\leq T} |X_t| \leq R) \geq \P(\sup_{t\leq T} |Y_t| \leq R).
\end{equation}
However, this is \emph{not} always true, as shown by the following
counter-example.

% It turns out that the comparison~(\ref{eq:comp}), although intuitive,
% is non-trivial. Our main contribution in this paper is to prove
% various versions of that comparison, based on \emph{coupling}
% techniques.

% The aim of this note is to compare the exit time of balls of two $d$-dimensional
% diffusions $X$ and $Y$ satisfying
% \[
%   \begin{cases}
%     dX_t = f(X_t) dt + \sigma dB_t\\ dY_t = g(Y_t) dt + \sigma dB_t.
%   \end{cases}
% \]
% Heuristically, it seems natural that if 
% \begin{equation}
%   \label{eq:hypHeuristique} \forall x \in \R^d, \quad x.f(x) \geq x.g(x),
% \end{equation}
% i.e. that $Y$ experiences as stronger pull toward $0$ than $x$ at any
% point, then one should have for all $t \geq 0$
% \begin{equation}
%   \label{eq:tailNoise}
%   \P( \|X_s\| \leq 1, s \leq t ) \leq \P(\|Y_s\| \leq 1, s \leq t).
% \end{equation}
% However, this is not the case, as counter-examples exist.

%We describe one such example for diffusions in $\R$. 

For a given $\lambda \in \R$, denote by $X^{\lambda}$ the
strong solution of the SDE
\[
  dX_t = -X_t \ind{X_t>0} dt - \lambda X_t \ind{X_t < 0} + dB_t,
\]
i.e. $X^{\lambda}$ is similar to an Ornstein-Uhlenbeck process with
pull 1 on the right half-line, and with pull strength $\lambda$ on the left
half-line. Note that, if $\lambda>\lambda'$, then $X^\lambda$ is
subject to a pull stronger or equal to that of $X^{\lambda'}$
everywhere.

Given $R,T$, consider the containment probability
\[
P(\lambda) := \P\left( \sup_{t \leq T} |X^\lambda_t| \leq R \right).
\]

Fig.~\ref{fig:contre-ex} shows the values of $P(\lambda)$ for
$\lambda\geq 1$, $R=0.5$, $T=1$. One can observe that $P(\lambda)$ is
non-monotonic: it increases on the right of $\lambda=1$, reaches a
maximum at $\lambda\simeq 20$, then decreases towards $P(1)$ when
$\lambda\to\infty$.

\begin{figure}[htp]
\centering
\includegraphics[width=10cm]{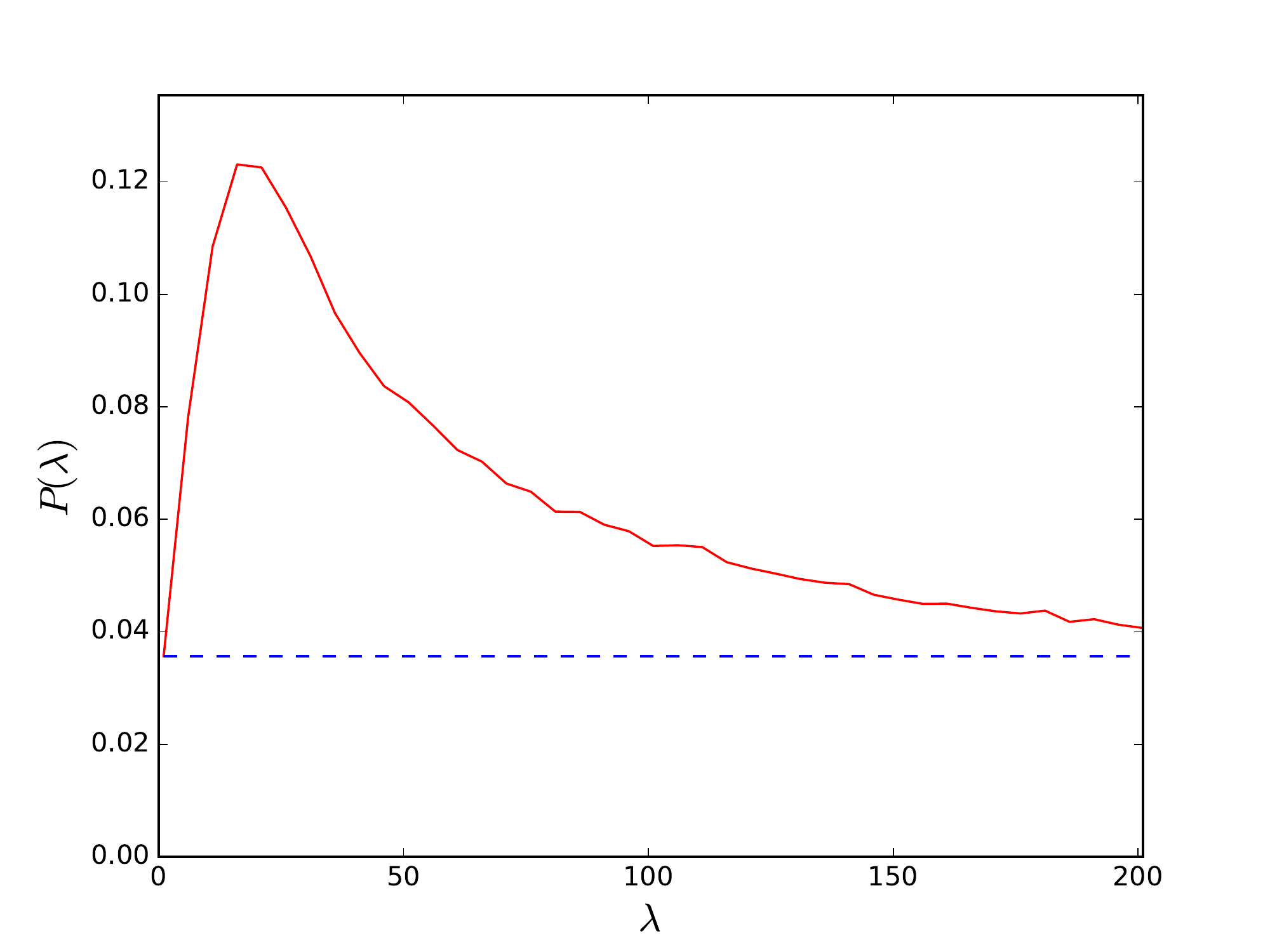}
\caption{A stronger pull does not always imply a larger containment
  probability. The red plain line shows the containment probability
  $P(\lambda)$ for $\lambda\geq 1$ with $R=0.5, T=1$ (obtained by
  Monte Carlo simulation). One can note that the containment
  probability increases with the pull strength $\lambda$ only until
  $\lambda\simeq 20$, then it decreases towards $P(1)$ when
  $\lambda\to\infty$ (dashed blue line).}
\label{fig:contre-ex}
\end{figure}

Intuitively, increasing the strength of the pull on the left half-line
has two opposite effects:
\begin{enumerate}
\item A stronger pull on the left half-line makes exits by the left
  boundary more ``difficult'', thereby contributing positively to the
  containment probability;
\item But, at the same time, increasing the pull strength
  asymmetrically might ``chase'' the diffusion from an area where it
  is well-controlled towards an area where the control is not as good,
  which can, in turn, help the process escape. In the limit
  $\lambda \to \infty$, the left half-line acts as a solid
  wall. Therefore, $\lim_{\lambda \to \infty} X^\lambda = |X^1|$,
  which implies that $\lim_{\lambda \to \infty} P(\lambda) = P(1)$.
\end{enumerate}

For small values of $\lambda$, the first effect dominates, while for
large values of $\lambda$, the second effect does, as can be observed
in Fig.~\ref{fig:contre-ex}. Thus, for $\lambda$ large enough, the
containment probability decreases with $\lambda$, which contradicts
the intuition of \eqref{eq:comp}.

In the next section, we shall define a suitable dominance
relationship between the pull strengths, that is, one that implies an
order relationship between the containment probabilities.

\section{Comparison Theorem under symmetric dominance assumption}
\label{sec:main}

\subsection{Comparison Theorem in dimension $d=1$}
\label{sec:dim1}

To avoid the phenomenon of concentration in ``safe havens'' where
exits are subsequently easier, one can ``symmetrize'' the dominance
assumption of \eqref{eq:assum-orig} as in the following Theorem (see
Fig.~\ref{fig:dominance} for illustration).

\begin{figure}[htp]
\centering
\includegraphics[width=6cm]{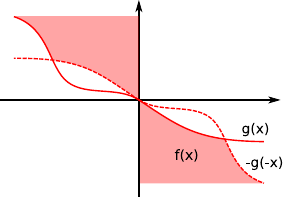}
\caption{Symmetrization of the dominance assumption.}
\label{fig:dominance}
\end{figure}

\begin{theorem}[Comparison Theorem in dimension 1]
  \label{thm:onedimdiffusion}
  Let $f,g$ be continuous functions satisfying
  \begin{eqnarray}
    \label{eq:assump-sym}
    \forall x \in [0,K],&  f(x) \leq \min(g(x),-g(-x))\nonumber\\
    \forall x \in [-K,0],&  f(x) \geq \max(g(x),-g(-x)),  \end{eqnarray}
  then, given the SDEs
  \begin{eqnarray}
    dX_t &=& f(X_t) dt + \sgn(X_t) dB_t\nonumber\\
    dY_t &=& g(Y_t) dt + \sgn(Y_t) dB_t,\nonumber
  \end{eqnarray}
  one has almost surely
  $|X_t| \leq |Y_t|$ for all $t \leq T_K := \inf\{ t > 0 : |Y_t|=K\}$.
  In particular, one has
  \[
    \forall R \in (0,K],\ \forall T \geq 0,\ \P( \sup_{t \leq T} |X_t|
    \leq R ) \geq \P(\sup_{t \leq T} |Y_t| \leq R). 
    \]
\end{theorem}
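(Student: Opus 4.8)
The plan is to pass to the absolute values $U_t := |X_t|$ and $V_t := |Y_t|$ and to exploit the fact that the diffusion coefficient $\sgn(\cdot)$ has been chosen precisely so that $U$ and $V$ are driven by the \emph{same} Brownian motion. Working with a joint solution $(X,Y)$ driven by a common $B$, I would first note that $X$ (resp.\ $Y$) has the same zero set as a Brownian motion, hence spends zero Lebesgue time at $0$, so that $\int_0^t \ind{X_s\neq 0}\,dB_s = B_t$; Tanaka's formula then yields
\[
  U_t = |X_0| + \int_0^t \sgn(X_s)f(X_s)\,ds + B_t + L^0_t(X),
\]
with the analogous identity for $V_t$, where $L^0_t(\cdot)$ denotes local time at $0$. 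Subtracting, the process $Z_t := U_t - V_t$ is \emph{of finite variation} — the martingale parts cancel — with
\[
  dZ_t = \big[\sgn(X_t)f(X_t) - \sgn(Y_t)g(Y_t)\big]\,dt + dL^0_t(X) - dL^0_t(Y),
\]
and $Z_0 = |X_0| - |Y_0| \le 0$ (I take $|X_0|\le |Y_0|$, e.g.\ $X_0=Y_0$, as the intended assumption on initial data).

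Next I would extract from the symmetrized hypothesis \eqref{eq:assump-sym} the inequalities that drive the comparison. Writing $\bar g(\ell) := \min\!\big(g(\ell),-g(-\ell)\big)$ for $\ell\in[0,K]$, a short check over the four sign combinations of $X_t$ and $Y_t$ shows that $\sgn(X_t)f(X_t)\le \bar g(|X_t|)$ whenever $|X_t|\in(0,K]$, and $\sgn(Y_t)g(Y_t)\ge \bar g(|Y_t|)$ whenever $|Y_t|\in[0,K]$. Since in addition $L^0(X)$ grows only on $\{U=0\}$ and $L^0(Y)$ only on $\{V=0\}$, on the set $\{Z_t>0\}$ — where $U_t>V_t\ge0$, so $U_t>0$ and $dL^0_t(X)=0$, while $-dL^0_t(Y)\le0$ — one obtains, up to the first exit of $U$ or $V$ from $[0,K]$,
\[
  \ind{Z_t>0}\,dZ_t \;\le\; \ind{Z_t>0}\big[\bar g(U_t)-\bar g(V_t)\big]\,dt .
\]

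I would then conclude, first assuming in addition that $f,g$ (hence $\bar g$ on $[0,K]$) are Lipschitz. Since $Z$ is continuous and of finite variation, $Z^+_t = \int_0^t \ind{Z_s>0}\,dZ_s$ (using $Z_0\le0$ and that a continuous finite-variation process has no local time). With $\tau := T_K\wedge\inf\{t:|X_t|\ge K\}$, the bound above together with $\bar g(U_s)-\bar g(V_s)\le L\,(U_s-V_s)=L\,Z^+_s$ on $\{Z_s>0\}$ gives $Z^+_{t\wedge\tau}\le L\int_0^t Z^+_{s\wedge\tau}\,ds$, so Gronwall forces $Z^+\equiv0$ on $[0,\tau]$; then $|X_t|\le|Y_t|$ for $t<\tau$, which by continuity is incompatible with $\tau<T_K$, so $|X_t|\le|Y_t|$ for all $t\le T_K$. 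The probability inequality follows: on $\{\sup_{t\le T}|Y_t|\le R\}$ with $R<K$ one has $T_K>T$, hence $\sup_{t\le T}|X_t|\le\sup_{t\le T}|Y_t|\le R$; and the case $R=K$ is recovered by letting $R\uparrow K$ (the law of $\sup_{t\le T}|Y_t|$ having no atom). For general continuous $f,g$, I would approximate: pick Lipschitz $f_n\to f$, $g_n\to g$ locally uniformly, \emph{shifted by $\mp\epsilon_n\to0$} so that \eqref{eq:assump-sym} still holds (strictly) for each $n$, apply the Lipschitz case to the associated solutions, and pass to the limit using stability of the laws of $X$ and $Y$ under coefficient convergence (a Portmanteau argument handling the probability inequality).

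I expect the main obstacle to be exactly this last step: preserving the domination under Lipschitz approximation — which forces the $\mp\epsilon_n$ shift rather than a naive mollification, since the latter could break \eqref{eq:assump-sym} wherever it holds with equality — and justifying well-posedness and stability of these SDEs, given that $\sgn(\cdot)\,dB_t$ already violates the usual conditions for strong existence. A secondary subtlety is the bookkeeping of the two local times at instants when $X$ and $Y$ are simultaneously at $0$; this turns out to be harmless only because $\ind{Z_s>0}$ vanishes there, which is why I would carry out the comparison through $Z^+$ rather than through $Z$ directly.
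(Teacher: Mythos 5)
Your argument is correct in substance but takes a genuinely different route from the paper's. The paper isolates the difficulty at the origin by first proving a lemma under the extra hypothesis $f=g$ on $[-\epsilon,\epsilon]$: it builds the coupling excursion by excursion (forcing $X=Y$ while $Y$ is inside the strip $[-\epsilon,\epsilon]$, so that the two local times act identically there) and, away from the strip, runs a pointwise contradiction argument on the derivative of $t\mapsto |Y_t|-|X_t|$; the general case is then recovered by spatially shifting $f$ and $g$ so that they vanish near $0$ and passing to the limit. You instead attack $|Y_t|-|X_t|$ globally: the observations that the martingale parts of $|X|$ and $|Y|$ coincide, that $Z=|X|-|Y|$ is therefore of finite variation, and that the indicator $\ind{Z_s>0}$ simultaneously kills $dL^0(X)$ and renders $-dL^0(Y)$ harmless, let you dispose of the behaviour at the origin with no excursion decomposition at all; your four-case verification of $\sgn(x)f(x)\le \bar{g}(|x|)\le \sgn(y)g(y)$ is exactly where the symmetrized hypothesis \eqref{eq:assump-sym} enters, just as in the paper. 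The price is that the Gronwall step needs $\bar{g}$ one-sided Lipschitz, which you buy back with a Lipschitz approximation shifted by $\mp\epsilon_n$ to preserve the domination --- structurally parallel to the paper's shifted $f^\epsilon,g^\epsilon$ and no harder to justify in the limit. Your route is arguably cleaner near $0$: the paper's contradiction argument tacitly sidesteps the case $|X_t|=|Y_t|=0$, which is precisely why it needs the $f=g$-near-$0$ lemma, whereas your $Z^+$ bookkeeping handles it directly.

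Two caveats, both shared with the paper rather than specific to your write-up: the identity $\int_0^t \ind{X_s\neq 0}\,dB_s = B_t$ requires $\int_0^t \ind{X_s=0}\,ds=0$, which is not automatic for this degenerate SDE (if $f(0)=0$ the constant solution sits at $0$), so ``same zero set as Brownian motion'' deserves either a proof or a convention on the choice of solution; and your final stability-under-coefficient-convergence step is asserted at the same level of detail as the paper's appeal to Skorokhod representation. Neither is a gap relative to the paper's own standard, and your treatment of the endpoint $R=K$ (no atom for $\sup_{t\le T}|Y_t|$) addresses a point the paper passes over.
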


Note that the processes $X$ and $Y$ and defined using the \emph{same}
noise process~$B$ -- a technique called
\emph{coupling}~\cite{lindvall2002lectures}.  Heuristically, the
coupling is as follows
\begin{itemize}
\item if $X_t$ and $Y_t$ have the same sign, then $X_{t+dt}$ and
  $Y_{t+dt}$ are constructed using the same noise;
\item if $X_{t}$ and $Y_t$ have opposite sign, then $X_{t+dt}$ and
  $Y_{t+dt}$ are constructed using opposite noises.
\end{itemize}
Thus, in both cases, the \emph{absolute values} of $X_t$ and $Y_t$
move in the same direction.

The main difficulty in proving Theorem~\ref{thm:onedimdiffusion} is
the time instants when $X$ and $Y$ go close to 0. We shall tackle this
difficulty by an approximation technique. Consider the stronger
assumptions of the following Lemma.

%Let $f$ and $g$ be two continuous functions satisfying
%\begin{multline}
%  \label{eq:hypothesisStrong}
%  \text{there exists an open neighbourhood of } \{(x,y), 0< |x|=|y|
%  \leq K\} \\ \text{ on which } 0 \leq - \sgn(x) f(x) \leq - \sgn(x)
%  g(x) \quad \text{and} \quad f(0)=g(0)=0,
%\end{multline}
%where $\sgn(x) = \ind{x > 0} - \ind{x\leq 0}$. We denote by $X$ and
%$Y$ solutions of the SDE
%\begin{equation}
%  \label{eq:defXY}
%  \begin{cases}
%    dX_t = f(X_t) dt + dB_t\\ dY_t = g(Y_t) dt + d\tilde{W}_t.
%  \end{cases}
%\end{equation}
%
%We prove here the following result.
%\begin{theorem}
%\label{thm:main1d}
%Assuming \eqref{eq:hypothesis}, there exists a coupling between the
%processes $X$ and $Y$ such that setting $T_K = \inf\{ t > 0 : |Y_t|
%\geq K\}$, we have
%\[
%  \forall t \leq T_K, |Y_t| \leq |X_t| \quad \text{a.s.}
%\]
%\end{theorem}

\begin{lemma}
  \label{lem:nonsatifsfying}
  Let $f,g$ be continuous functions satisfying
  \eqref{eq:assump-sym}. Assume moreover that there exists
  $\epsilon>0$ such that
  \begin{equation}
    \label{eq:nonsatisfying}
    \forall |x| \leq \epsilon,\ f(x) = g(x).
  \end{equation}
  Then the conclusions of Theorem~\ref{thm:onedimdiffusion} hold.
\end{lemma}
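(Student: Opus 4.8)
The plan is to prove the lemma by a coupling argument, exploiting the extra assumption \eqref{eq:nonsatisfying} to avoid the delicate behaviour of the two processes near the origin. The key observation is that, since $f = g$ on $[-\epsilon,\epsilon]$, there is effectively no ``comparison'' to enforce when both processes are in that neighbourhood — they satisfy the \emph{same} SDE there (up to the sign of the noise). So the idea is to track the difference of absolute values $|Y_t| - |X_t|$ and show it stays nonnegative up to time $T_K$, handling separately the regime where $|Y_t| > \epsilon$ (where the dominance \eqref{eq:assump-sym} does real work) and the regime where $|Y_t| \le \epsilon$ (where we must ensure $X$ cannot overtake $Y$ either).

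First I would make the coupling precise. Define a single driving Brownian motion $B$ and set $dX_t = f(X_t)\,dt + \sgn(X_t)\,dB_t$, $dY_t = g(Y_t)\,dt + \sgn(Y_t)\,dB_t$. Apply the Tanaka/Itô–Tanaka formula to $|X_t|$ and $|Y_t|$: writing $U_t := |Y_t|$, $V_t := |X_t|$, one gets $dU_t = \sgn(Y_t)g(Y_t)\,dt + dB_t + dL^0_t(Y)$ and $dV_t = \sgn(X_t)f(X_t)\,dt + dB_t + dL^0_t(X)$, where $L^0$ denotes local time at $0$, which is nondecreasing. Crucially, \emph{the martingale parts cancel}: $d(U_t - V_t) = \big(\sgn(Y_t)g(Y_t) - \sgn(X_t)f(X_t)\big)dt + dL^0_t(Y) - dL^0_t(X)$, so $U_t - V_t$ is a finite-variation (indeed continuous) process. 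This is exactly the heuristic in the paper that ``the absolute values move in the same direction.'' It remains to show $U_t - V_t \ge 0$ for $t \le T_K$, starting from $U_0 - V_0 = 0$ (assuming $X_0 = Y_0$; more generally one would start from $|X_0| \le |Y_0|$).

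Next I would run the standard ``first time the difference goes negative'' argument. Suppose for contradiction that $\tau := \inf\{t \le T_K : U_t < V_t\} < T_K$; by continuity $U_\tau = V_\tau =: r \ge 0$ and there is a sequence of times just after $\tau$ where $U < V$. The drift of $U - V$ at such a point is $\sgn(Y_t)g(Y_t) - \sgn(X_t)f(X_t)$. Here I split into cases on the common value $r$ and the signs. If $r > \epsilon$, then at times near $\tau$ we have $|Y_t| > \epsilon$, hence by \eqref{eq:assump-sym} applied to whichever half-line $Y_t$ sits in, $\sgn(Y_t)g(Y_t) \ge \min\big(g(|Y_t|), -g(-|Y_t|)\big) \ge f(|Y_t|) = f(V_t)\,\sgn(X_t)\dots$ — one has to be a little careful tracking signs, but the symmetrized hypothesis is precisely designed so that $\sgn(Y_t)g(Y_t) \ge \sgn(X_t)f(X_t)$ whenever $V_t \le U_t$ and $U_t \le K$, \emph{regardless of the individual signs of $X_t$ and $Y_t$}; that is the whole point of symmetrizing. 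Combined with $dL^0(Y) - dL^0(X)$ being harmless because local time of $Y$ only accrues when $Y_t = 0$ (so $U_t = 0 = r$, forcing $V_t = 0$ too and handling that sub-case by a separate comparison-near-zero estimate), this forces the drift of $U - V$ to be $\ge 0$ at $\tau$, contradicting the assumed immediate descent below zero. When $r \le \epsilon$, both $X_t$ and $Y_t$ are in $(-\epsilon,\epsilon)$ for $t$ near $\tau$, where $f = g$; then $\sgn(Y_t)g(Y_t) - \sgn(X_t)f(X_t) = \sgn(Y_t)g(Y_t) - \sgn(X_t)g(X_t)$, and since $g$ is continuous on the compact $[-\epsilon,\epsilon]$ it is Lipschitz there after mollification — more simply, one localizes and uses a Gronwall-type estimate on $(U-V)^-$ to conclude it cannot become positive. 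To make the ``first negative time'' step rigorous I would rather integrate: show $\mathbf{E}[(U_{t\wedge\tau_n} - V_{t\wedge\tau_n})^-] = 0$ via the nonnegativity of the drift on the event $\{U < V\}$ plus a Gronwall bound, using stopping times $\tau_n$ to handle unboundedness of $g$ outside $[-K,K]$, noting that before $T_K$ we have $|Y_t| \le K$ and hence $|X_t| \le |Y_t| \le K$ as long as the comparison holds, so the coefficients stay in a compact set.

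\textbf{The main obstacle} I expect is the local-time term and the behaviour exactly at $0$. When $Y_t = 0$, the process $U_t = |Y_t|$ gains local time, which pushes $U_t - V_t$ \emph{upward} (good), but $X_t$ might simultaneously be away from $0$ with $V_t > 0$, and one must be sure the sign conventions of the coupled noise are consistent there — the SDE uses $\sgn(X_t)dB_t$ and $\sgn(Y_t)dB_t$, and $\sgn(0) = 0$, so when $Y_t = 0$ the process $Y$ instantaneously has no noise, which is consistent with $dB_t + dL^0_t(Y)$ in the Tanaka formula only if one is careful about which version of Tanaka's formula applies to a diffusion whose diffusion coefficient vanishes at the comparison point. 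I would address this by noting $\sgn(Y_t)^2 = \ind{Y_t \ne 0}$ and that $\{t : Y_t = 0\}$ has zero Lebesgue measure a.s. (so drift integrals are unaffected), while the quadratic variation of $Y$ is still $\int_0^t \ind{Y_s\ne0}\,ds = t$ a.s. since a one-dimensional diffusion spends zero time at a point; thus $|Y_t|$ genuinely satisfies the Tanaka decomposition with a Brownian motion $\int_0^t \sgn(Y_s)\,dB_s$ having the same quadratic variation $t$ — but this is a \emph{different} Brownian motion for $X$ and for $Y$! So the cancellation of martingale parts is subtler than it first appears: $\int_0^t \sgn(Y_s)dB_s$ and $\int_0^t \sgn(X_s)dB_s$ are equal only when $\sgn(X_s) = \sgn(Y_s)$ for a.e.\ $s$, which is not assumed. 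The resolution — and the real content of the proof — is that $U_t - V_t = \int_0^t \big(\sgn(Y_s)g(Y_s) - \sgn(X_s)f(X_s)\big)ds + \int_0^t(\sgn(Y_s) - \sgn(X_s))dB_s + L^0_t(Y) - L^0_t(X)$, and one must show the stochastic integral $\int_0^t(\sgn(Y_s) - \sgn(X_s))dB_s$ does not spoil the comparison. I would handle this by observing that on the first interval after $\tau$ where $U < V$, as long as this interval is short the processes $X,Y$ stay close to the common value $r$ at time $\tau$, so if $r \ne 0$ they have the same sign (continuity), killing the integrand $\sgn(Y_s) - \sgn(X_s)$; the only genuinely hard point is $r = 0$, where I would use the symmetry of the noise coupling together with assumption \eqref{eq:nonsatisfying} to show that $|X|$ and $|Y|$ are coupled to an \emph{identical} reflecting-type SDE near $0$, so they cannot separate there — effectively a pathwise-uniqueness argument for $(|X|,|Y|)$ viewed near the origin as the same Bessel-like process driven by a common (reflected) noise.
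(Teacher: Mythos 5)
Your skeleton matches the paper's: couple $X$ and $Y$ through the same Brownian motion with diffusion coefficients $\sgn(X_t)$, $\sgn(Y_t)$, apply It\^o--Tanaka, observe that the martingale parts of $|X|$ and $|Y|$ coincide, and compare the drifts at a first crossing time. Your first-paragraph decomposition $d(|Y_t|-|X_t|) = (\sgn(Y_t)g(Y_t)-\sgn(X_t)f(X_t))\,dt + dL^Y_t - dL^X_t$ is exactly the paper's \eqref{eq:auxDifferences}, and it is \emph{correct}: the martingale part of $|Y|$ is $\int_0^t \sgn(Y_s)^2\,dB_s = \int_0^t \ind{Y_s\neq 0}\,dB_s = B_t$ a.s., identical for both processes. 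Your final paragraph then talks yourself out of this, replacing the martingale part by $\int_0^t \sgn(Y_s)\,dB_s$ and worrying about a residual integral $\int_0^t(\sgn(Y_s)-\sgn(X_s))\,dB_s$; that is a miscalculation (you dropped one factor of $\sgn$ coming from the SDE's own diffusion coefficient), and the ``real content of the proof'' you locate there is a phantom problem.

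The genuine gap is where you handle the origin, which is precisely where hypothesis \eqref{eq:nonsatisfying} must earn its keep. Your two proposed tools both fail: (i) a Gronwall estimate on $(|Y|-|X|)^-$ requires a (one-sided) Lipschitz drift, but $f=g$ is only assumed continuous on $[-\epsilon,\epsilon]$, and ``Lipschitz after mollification'' is not an argument -- mollifying changes the equation; (ii) a ``pathwise-uniqueness argument for $(|X|,|Y|)$ near the origin'' is unavailable, because SDEs with diffusion coefficient $\sgn(\cdot)$ are of Tanaka type: the paper explicitly notes that trajectorial uniqueness fails, only uniqueness in law holds, so two solutions driven by the same noise need not coincide. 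The paper's resolution is structural rather than analytic: it \emph{chooses} the solution $X$. It introduces the stopping times $\tau^{(k)}$ (returns of $Y$ to $0$) and $\upsilon^{(k)}$ (subsequent exits of $Y$ from $[-\epsilon,\epsilon]$), \emph{defines} $X_t := Y_t$ on each $[\tau^{(k)},\upsilon^{(k)}]$ -- legitimate because $f=g$ on the strip, so $Y$ is itself a solution of \eqref{eq:integral-X} there -- and runs your drift-comparison argument only on the intervals $[\upsilon^{(k)},\tau^{(k+1)}]$, where $Y$ is bounded away from $0$ until the terminal time, so the local times are constant and the first-crossing contradiction goes through; the induction closes because $|X|\leq|Y|$ forces $X_{\tau^{(k+1)}}=0$, and a law-of-large-numbers argument gives $\tau^{(k)}\to\infty$. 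Without this construction-by-selection of the solution (permitted since only the law of the pair matters for the stated conclusion), your treatment of the near-zero regime does not go through.
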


\begin{proof}
  Note first that \eqref{eq:nonsatisfying} and \eqref{eq:assump-sym}
  together imply that 
  \[
  \forall |x| \leq \epsilon,\ -f(-x) = f(x) = g(x) = -g(-x).
  \]

  Let $B$ be a standard Brownian motion, we construct on the same
  probability space two processes $X$ and $Y$ satisfying
  \begin{eqnarray}
    X_0 &=& Y_0 = 0 \nonumber\\
    \label{eq:integral-X}
    X_t &=& \int_0^t f(X_s) ds + \int_0^t\sgn(X_s) dB_s
    \\ 
    \label{eq:integral-Y}
    Y_t &=& \int_0^t
            g(Y_s) ds + \int_0^t\sgn(Y_s) dB_s.
  \end{eqnarray}
  Note that there is no trajectorial uniqueness for this type of
  stochastic differential equation. However, there exist solutions,
  and uniqueness in law holds by boundedness of $f$ and $g$ on
  compact intervals.

  Given a solution $Y$ of \eqref{eq:integral-Y}, define
  a sequence of hitting times as follows. Let
  $\tau^{(0)} := 0$ and
  \[
  \upsilon^{(k)} := \inf\{ t > \tau^{(k)} : |Y_t| = \epsilon\}; \quad
  \tau^{(k+1)} := \inf\{ t > \upsilon^{(k)} : Y_t = 0\}.
  \]
  In other words, $\upsilon^{(k)}$ is the first time, after time
  $\tau^{(k)}$, that $Y$ leaves the strip $[-\epsilon,\epsilon]$, and
  $\tau^{(k+1)}$ is the first time, after time $\upsilon^{(k)}$, that
  $Y$ hits $0$.

  We now construct $X$ as follows (see Fig.~\ref{fig:traj} for sample
  paths of $X$ and $Y$)
  \begin{itemize}
  \item For all $t \in [\tau^{(k)},\upsilon^{(k)}]$, $X_t=Y_t$;
  \item On $[\upsilon^{(k)},\tau^{(k+1)}]$, $X$ is the unique solution of
    \eqref{eq:integral-X} on this interval. 
  \end{itemize}
  
  \begin{figure}[htp]
    \centering
    \includegraphics[width=10cm]{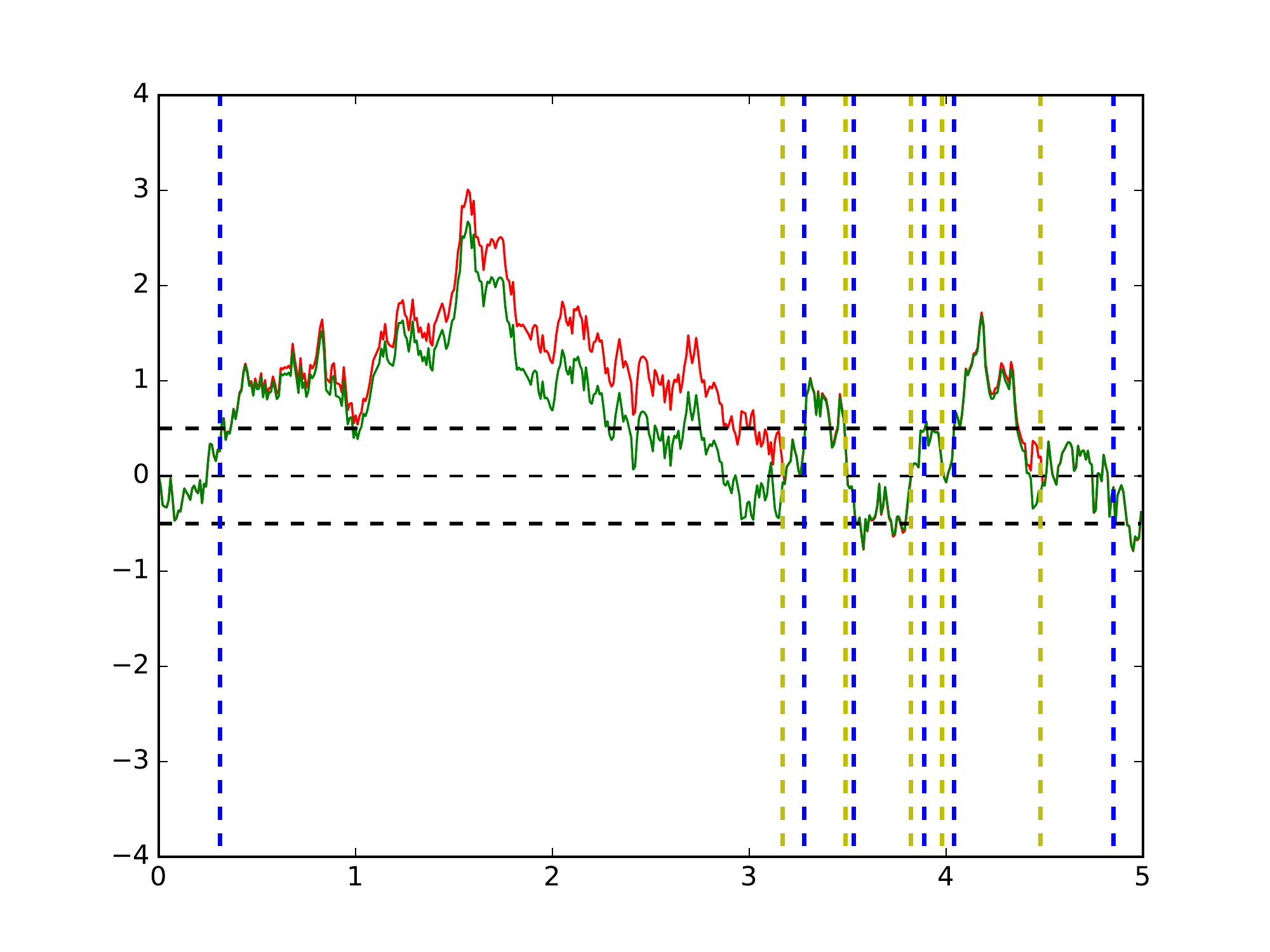}
    \caption{Sample trajectories for the processes $X$ (green) and $Y$
      (red). Here $\epsilon=0.5$ (black dashed lines). The hitting
      times $\tau$ (dashed yellow) and $\upsilon$ (dashed blue) are also
      indicated. Note that $X=Y$ in the intervals [yellow, blue] and
      $|X| \leq |Y|$ in the intervals [blue, yellow].}
    \label{fig:traj}
  \end{figure}

  We show, by induction on $k\in\mathbb{N}$, the following properties
  of $X$
  \begin{description}
  \item[(i)] $X$ is a solution of \eqref{eq:integral-X} on
    $[\tau^{(k)},\upsilon^{(k)}]$ and $[\upsilon^{(k)},\tau^{(k+1)}]$;
  \item[(ii)] $|X|\leq |Y|$ in the above intervals;
  \item[(iii)] $X_{\tau^{(k+1)}}=0$.
  \end{description}

  \emph{Initialization:} \textbf{(i)} For $k=0$, note that, for all
  $t\in[\tau^{(0)},\upsilon^{(0)}]$, $X_t=Y_t\leq \epsilon$. Thus, by
  \eqref{eq:nonsatisfying}, one has $f(X_t)=g(Y_t)$ over the whole
  interval, which in turn implies that $X=Y$ is a solution of
  \eqref{eq:integral-X} on the interval. The fact that $X$ is a
  solution of \eqref{eq:integral-X} on $[\upsilon^{(0)},\tau^{(1)}]$ is
  by construction.

  \textbf{(ii)} Next, $|X_t|\leq |Y_t|$ trivially on
  $[\tau^{(0)},\upsilon^{(0)}]$. On $[\upsilon^{(0)},\tau^{(1)}]$, one
  has, by It\^o-Tanaka formula
  \begin{eqnarray*}
    |X_t| &=& \int_0^t \sgn(X_s) f(X_s) ds + B_t + L^X_t\\ 
    |Y_t| &=& \int_0^t \sgn(Y_s) g(Y_s) ds + B_t + L^Y_t,
  \end{eqnarray*}
  where $L^X$ and $L^Y$ are the local times at $0$ of $X$
and $Y$ respectively. This yields
  \begin{equation}
    \label{eq:auxDifferences}
    |Y_t| - |X_t| = \int_0^t \sgn(Y_s) g(Y_s) - \sgn(X_s) f(X_s) ds +
    L^Y_t - L^X_t.
  \end{equation}

  Assume by contradiction that there exists some time
  $t\in [\upsilon^{(0)},\tau^{(1)})$ such that $|X_t|=|Y_t|$, and
  $|X_{t+u}|>|Y_{t+u}|>0$ for all $u>0$ small enough.  As both
  $|X_{t+u}|$ and $|Y_{t+u}|$ are different from $0$ for $u$ close
  enough to $0$, one has that $L^X$ and $L^Y$ are constant in a
  neighbourhood of $t$. Therefore, by \eqref{eq:auxDifferences},
  $h: t \mapsto |Y_t|-|X_t|$ is differentiable in a neighborhood of
  $t$, and its derivative is, in that neighborhood,
  \[
  h'(s) = \sgn(Y_s) g(Y_s) - \sgn(X_s) f(X_s) \geq 0, 
  \]
  which contradicts the fact that $|X_{t+u}| > |Y_{t+u}|$ for $u$
  small enough.

  We have thus shown that  $|X_t|\leq |Y_t|$ on
  $[\upsilon^{(0)},\tau^{(1)})$. The inequality can be extended to the
  closed interval by continuity.

  \textbf{(iii)} Since $|X|\leq |Y|$ on the interval
  $[\upsilon^{(0)},\tau^{(1)}]$ and that $Y_{\tau^{(1)}}=0$, one has
  $X_{\tau^{(1)}}=0$. 

  \emph{Induction:} By the induction hypothesis, one has
  $X_{\tau^{(k)}}=Y_{\tau^{(k)}}=0$. Thus the proof that
  $|X_s| \leq |Y_s|$ for $s \in [\tau^{(k)},\tau^{(k+1)}]$ is
  straightforward by using the Markov property and adapting the
  \emph{Initialization} step.

  To complete the proof, note that
  $\lim_{k \to \infty}\tau^{(k)} = \infty$, which can be obtained by
  observing that, by the law of large numbers, one has almost surely
  \[
  \lim_{k \to \infty} \frac{1}{k} \sum_{j=1}^k \upsilon^{(j)} -   \tau^{(j)} =
  \E(\upsilon^{(1)}) > 0.
  \]
\end{proof}

We are now in a position to prove Theorem~\ref{thm:onedimdiffusion}.

\begin{proof}
  Let $f$ and $g$ be two functions satisfying
  \eqref{eq:assump-sym}. For all $\epsilon > 0$, we set
  \begin{eqnarray}
    f^\epsilon(x) &:=& \ind{x>2\epsilon} f(x-2\epsilon) + \ind{x <
                       -2\epsilon} f(x+2\epsilon)\nonumber\\
    g^\epsilon(x) &:=&
                       \ind{x>\epsilon} g(x-\epsilon) + \ind{x < -\epsilon} g(x+\epsilon).    \nonumber
  \end{eqnarray}
  Note that $f^\epsilon,g^\epsilon$ satisfy \eqref{eq:assump-sym} and
  \eqref{eq:nonsatisfying}, therefore, by
  Lemma~\ref{lem:nonsatifsfying}, the conclusions of
  Theorem~\ref{thm:onedimdiffusion} hold for $X^\epsilon$ and
  $Y^\epsilon$. Next, since the law of $(X^\epsilon, Y^\epsilon)$
  converges toward the law of $(X,Y)$, one can conclude, by
  Skorokhod's embedding theorem, that there is trajectorial
  convergence, and that the limiting process $(X,Y)$ satisfies the
  conclusion of Theorem~\ref{thm:onedimdiffusion}.
\end{proof}

\begin{remark}
  If $Y$ is an Ornstein-Uhlenbeck process with pull strength $\lambda$, then
  condition~\eqref{eq:assump-sym} becomes
  \[
  \forall x \in [-K,K],\  \sgn(x)f(x) \leq -\lambda |x|.
  \]
\end{remark}

\subsection{Comparison Theorem in dimension $d\geq 2$}
\label{sec:dimd}

To enforce a ``symmetric'' dominance assumption in dimension
$d\geq 2$, we introduce the radial decomposition: for
$x\in \R^d \backslash \{0\}$, denote by $R_x$ the matrix of the
rotation that brings $\frac{x}{\|x\|}$ to the first basis vector $e_1$
(by convention, $R_0 :=
\mathrm{Id}$).
Theorem~\ref{thm:onedimdiffusion} can now be extended to dimension
$d\geq 2$ as follows.

\begin{theorem}[Comparison Theorem in dimension $d\geq 2$]
  \label{thm:dimd}
  Let $f,g$ be continuous functions satisfying
  \begin{equation}
    \label{eq:conddimd}
    \forall r \in \R_+,\ \sup_{\theta \in \mathbb{S}_{d-1}}
    f(r\theta)^\top\theta \leq \inf_{\theta \in \mathbb{S}_{d-1}}
    g(r\theta)^\top\theta.
  \end{equation}
  then, given the SDEs
  \begin{eqnarray}
    dX_t &=& f(X_t) dt + R_{X_t}^{-1} dB_t \nonumber\\ 
    dY_t &=& g(Y_t) dt + R_{Y_t}^{-1} dB_t, \nonumber
  \end{eqnarray}
  one has almost surely
  $\|X_t\| \leq \|Y_t\|$ for all $t \leq T_K := \inf\{ t > 0 : \|Y_t\|=K\}$.
  In particular, one has
  \[
    \forall R \in (0,K],\ \forall T \geq 0,\ \P( \sup_{t \leq T} \|X_t\|
    \leq R ) \geq \P(\sup_{t \leq T} \|Y_t\| \leq R). 
    \]
\end{theorem}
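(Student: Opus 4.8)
The plan is to reduce, as in the proof of Theorem~\ref{thm:onedimdiffusion}, to a one-dimensional comparison between the radial processes $\rho_t:=\|X_t\|$ and $\eta_t:=\|Y_t\|$, and then to dispose of the behaviour near the origin by the same approximation device used to pass from Lemma~\ref{lem:nonsatifsfying} to Theorem~\ref{thm:onedimdiffusion}.

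The key first step is to apply It\^o's formula to $x\mapsto\|x\|$ along $X$ and along $Y$. Because $R_x$ is a rotation, $R_x^{-1}(R_x^{-1})^\top=\mathrm{Id}$, so the quadratic covariation of $X$ (and of $Y$) is $\mathrm{Id}\,dt$; the Hessian of $\|\cdot\|$ at $x$ is $\|x\|^{-1}(\mathrm{Id}-xx^\top/\|x\|^2)$, of trace $(d-1)/\|x\|$, which produces the Bessel-type drift $\tfrac{d-1}{2\rho_t}\,dt$; and, since $R_{X_t}$ sends $X_t/\|X_t\|$ to $e_1$, one has $\tfrac{X_t^\top}{\|X_t\|}R_{X_t}^{-1}=e_1^\top$, so that the martingale part of $d\rho_t$ is $e_1^\top dB_t$. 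The crucial observation is that the identical computation for $Y$ produces the \emph{same} martingale part $e_1^\top dB_t=:d\beta_t$, with $\beta$ a standard one-dimensional Brownian motion; hence
\[
d\rho_t=\Bigl(f(X_t)^\top\tfrac{X_t}{\|X_t\|}+\tfrac{d-1}{2\rho_t}\Bigr)dt+d\beta_t,\qquad d\eta_t=\Bigl(g(Y_t)^\top\tfrac{Y_t}{\|Y_t\|}+\tfrac{d-1}{2\eta_t}\Bigr)dt+d\beta_t,
\]
the exact $d$-dimensional analogue of the fact, in dimension $1$, that $|X_t|$ and $|Y_t|$ are both driven by $+\,dB_t$.

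Subtracting, $\eta_t-\rho_t$ has no martingale part, and whenever $\rho_s=\eta_s=r>0$ the two Bessel terms cancel while, writing $X_s=r\theta_X$ and $Y_s=r\theta_Y$ with $\theta_X,\theta_Y\in\mathbb{S}_{d-1}$, hypothesis~\eqref{eq:conddimd} yields
\[
g(Y_s)^\top\tfrac{Y_s}{\|Y_s\|}=g(r\theta_Y)^\top\theta_Y\ \geq\ \inf_{\theta\in\mathbb{S}_{d-1}}g(r\theta)^\top\theta\ \geq\ \sup_{\theta\in\mathbb{S}_{d-1}}f(r\theta)^\top\theta\ \geq\ f(r\theta_X)^\top\theta_X=f(X_s)^\top\tfrac{X_s}{\|X_s\|},
\]
so the drift of $\eta-\rho$ is nonnegative at every such instant. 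This explains why the supremum/infimum form of~\eqref{eq:conddimd} is the correct $d$-dimensional substitute for the symmetrized condition~\eqref{eq:assump-sym}: it removes the dependence of the radial drifts on the direction $\theta$, and the term $\tfrac{d-1}{2}(\eta_t^{-1}-\rho_t^{-1})$, which works against the comparison once $\rho_t\neq\eta_t$, vanishes in the crossing configuration $\rho_t=\eta_t$. Assuming $\rho_0\leq\eta_0$, a first-crossing argument parallel to the one in the proof of Lemma~\ref{lem:nonsatifsfying}---now carried out on time intervals away from the origin, where everything is smooth and the Bessel drift plays the role of the local-time terms---then gives $\rho_t\leq\eta_t$ for all $t\leq T_K$. (Equivalently, one may sandwich $\rho$ and $\eta$ between the autonomous one-dimensional diffusions driven by $\beta$ with drifts $r\mapsto\sup_\theta f(r\theta)^\top\theta+\tfrac{d-1}{2r}$ and $r\mapsto\inf_\theta g(r\theta)^\top\theta+\tfrac{d-1}{2r}$, and invoke the one-dimensional comparison.)

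I expect the main obstacle to be, as in dimension $1$, the behaviour near the origin: there the drift $\tfrac{d-1}{2\rho_t}$ is singular and $R_x$ is discontinuous at $x=0$, so neither It\^o's formula nor the crossing argument can be run through $0$. I would resolve this exactly as in the passage from Lemma~\ref{lem:nonsatifsfying} to Theorem~\ref{thm:onedimdiffusion}: introduce \emph{pushed-out} drifts $f^\epsilon,g^\epsilon$ vanishing on the balls $\{\|x\|\leq2\epsilon\}$ and $\{\|x\|\leq\epsilon\}$ respectively, which still satisfy~\eqref{eq:conddimd}; inside the strip $\{\|x\|\leq\epsilon\}$ the associated processes $X^\epsilon,Y^\epsilon$ solve $dZ_t=R_{Z_t}^{-1}dB_t$, i.e. are Brownian motions whose radial parts are $d$-dimensional Bessel processes driven by the common $\beta$, and therefore have identical radial parts for as long as they remain inside the strip after entering it from the same radius; decomposing time into excursions inside and outside the strip much as in the proof of Lemma~\ref{lem:nonsatifsfying}, one obtains $\|X^\epsilon_t\|\leq\|Y^\epsilon_t\|$ for $t\leq T_K$ with the drifts $f^\epsilon,g^\epsilon$. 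Letting $\epsilon\to0$, $(X^\epsilon,Y^\epsilon)$ converges in law to $(X,Y)$---uniqueness in law holding by boundedness of $f,g$ on compacts---so by Skorokhod's representation theorem the convergence may be taken trajectorial and $\|X_t\|\leq\|Y_t\|$ on $[0,T_K]$ survives the limit. The probability statement then follows as in Theorem~\ref{thm:onedimdiffusion}: for $R\in(0,K]$, on $\{\sup_{t\leq T}\|Y_t\|\leq R\}$ one has $T\leq T_K$, whence $\sup_{t\leq T}\|X_t\|\leq\sup_{t\leq T}\|Y_t\|\leq R$.
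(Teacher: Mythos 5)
Your proposal follows essentially the same strategy as the paper's (very brief) proof: couple $X$ and $Y$ through the rotated noise so that both radial parts are driven by the same scalar Brownian motion $e_1^\top dB_t$, compare the radial drifts via \eqref{eq:conddimd} at first-crossing times, and handle the origin by the same $\epsilon$-approximation used to pass from Lemma~\ref{lem:nonsatifsfying} to Theorem~\ref{thm:onedimdiffusion}. If anything your version is more careful on one point: you write the It\^o decomposition of $\|X_t\|$ with the Bessel drift $\tfrac{d-1}{2\|X_t\|}$ and note that it cancels in the crossing configuration $\|X_t\|=\|Y_t\|$, whereas the paper's sketch records instead a local-time-type term at the origin.
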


\begin{proof}
  We follow the same strategy as in the proof of
  Theorem~\ref{thm:dimd}. Note that, using It\^o-Tanaka formula in
  dimension $d\geq 2$, one has
  \[
  ||X_t|| = \int_0^t f(X_s)^\top \frac{X_s}{||X_s||} ds + B_t.e_1 + L^X_t,
  \]
  and a similar formula for $Y$, where 
  \[
  L^X_t := \lim_{\epsilon \to 0}
  \frac{1}{V_d\epsilon^d} \int_0^t \ind{||X_s|| \leq \epsilon} ds.
  \]
  The rest of the proof is similar to that of Theorem~\ref{thm:dimd}.
\end{proof}

\begin{remark}
  \label{rem:dimd}
  If $Y$ is an Ornstein-Uhlenbeck process with pull strength $\lambda$, then
  condition~\eqref{eq:conddimd} becomes
  \begin{eqnarray}
    \forall r \in \R_+,\ \sup_{\theta \in \mathbb{S}_{d-1}}
    f(r\theta)^\top\theta \leq -\lambda r\nonumber.
  \end{eqnarray}
\end{remark}

\section{Link with contraction theory}
\label{sec:contraction}

Contraction theory~\cite{lohmiller1998contraction} provides a set of
tools to analyze the exponential stability of nonlinear systems, and
has been applied notably to observer design (see
e.g.~\cite{bonnabel2014contraction}), synchronization analysis (see
e.g.~\cite{tabareau2010synchronization}), and systems
neuroscience. Nonlinear contracting systems enjoy desirable
aggregation properties, in that contraction is preserved under many
types of system combinations given suitable simple
conditions~\cite{lohmiller1998contraction}.

We say that $f$ is \emph{contracting} with contraction rate $\lambda>0$
in the identity  metric~\cite{lohmiller1998contraction} if
\[
\forall x,t, \ \lambda_{\max}\left(\frac{\partial f}{\partial
  x}\right) \leq -\lambda,
\]
where $\lambda_{\max}(A)$ denotes the largest eigenvalue of the
symmetric part of matrix~$A$. A specialized result of contraction
theory is that, if $f$ is contracting with contraction rate $\lambda$,
then all system trajectories converge exponentially to a single
trajectory, with convergence rate $\lambda$. More general settings of
contraction theory can cater for the dependency of $f$ on the time
parameter $t$ as well as nonlinear metrics. For simplicity, however,
our current discussion is carried out without the dependency of $f$ on
$t$ and in the identity metric. Including time-dependency could be
addressed by adapting condition \eqref{eq:conddimd} to include
uniformity over $t\in\mathbb{R}$ and $\omega\in\Omega$. Extension to
nonlinear metrics would likely involve checking whether the metrics
are compatible with the symmetric dominance assumption. Such
extensions will be investigated in our future work.

Assume now that $f$ is contracting with contraction rate $\lambda$ in
the identity metric and consider two $d$-dimensional SDEs
\begin{eqnarray*}
  dX_t&=&f(X_t)dt+\sigma dB^X_t\\ 
  dY_t&=&f(Y_t)dt+\sigma dB^Y_t.
\end{eqnarray*}

Consider the $d$-dimensional process $Z:=Y-X$. One has 
\[
dZ_t = dY_t-dX_t = [f(Y_t)-f(X_t)]dt + \sqrt{2}\sigma dB^Z_t.
\]

Since $f$ is smooth, one can write\,\footnote{See for instance
  Lemma~1 at \url{https://en.wikipedia.org/wiki/Mean_value_theorem}.}
\begin{eqnarray*}
  f(Y_t)-f(X_t) &=& \left(\int_0^1 \frac{\partial f}{\partial
                    x} (X_t+s(Y_t-X_t)) ds
                    \right)(Y_t-X_t)\\ 
                &=:&g(X_t,Y_t) Z_t.
\end{eqnarray*}

Thus, one can rewrite
\[
dZ_t = g(X_t,Y_t)Z_t dt + \sqrt{2}\sigma dB^Z_t,
\]
where $g(X_t,Y_t)$ can be seen as an external driving signal.

Define $h(z):=g(x,y)z$. Let us evaluate the radial component
of $h$. For that, set $z=r\theta$ where $r>0$,
$\theta\in\mathbb{S}_{d-1}$. One has
\begin{eqnarray}
  h(r\theta)^\top\theta &=& \frac{1}{r}(g(x,y)z)^\top z  \nonumber \\
                        &=& \frac{1}{r}z^\top\left(\int_0^1 \frac{\partial f}{\partial
                            x} (x+s(y-x)) ds
                            \right)^\top z \nonumber\\
                        &=&\frac{1}{r}\int_0^1 z^\top\left(\frac{\partial f}{\partial
                           x} (x+s(y-x))\right)^\top z ds \nonumber\\
                        &\leq& \frac{1}{r} \int_0^1 -\lambda \|z\|^2 ds=
                               -\lambda r.\nonumber
\end{eqnarray}
Since $-\lambda r$ is the radial component of a $d$-dimensional
Ornstein-Uhlenbeck process with pull strength $\lambda$ (see
Remark~\ref{rem:dimd}), Theorem~\ref{thm:dimd} can be used to bound
the containment probabilities for the distance $\P( \sup_{t \leq T}
\|Y_t-X_t\| \leq R )$ by the corresponding containment probabilities
of a $d$-dimensional Ornstein-Uhlenbeck process with pull
strength~$\lambda$ and noise strength $\sqrt{2}\sigma$.

\section{Conclusion}
\label{sec:conclusion}

We have defined a dominance relationship between the pull strengths of
two nonlinear stochastic systems that implies an order relationship
between their containment probabilities. This result enables
establishing tight bounds on the containment probabilities for a large
class of nonlinear systems by comparing them with suitable
Ornstein-Uhlenbeck processes, for which containment probabilities can
be numerically obtained.

One important implication of this result is that one can immediately
bound the containment probabilities of stochastic systems that are
contracting with rate $\lambda$ by those of Ornstein-Uhlenbeck
processes with pull strength $\lambda$.

The results presented here may have many exciting applications in
control theory. For example, the design of controllers for optical
manipulation in~\cite{yan2016stochastic} could be extended to deal
with nonlinear trapping forces. Another application could be to
develop a rigorous theory of stability for Extended Kalman Filters,
e.g. by extending the contraction-theory-based analysis
of~\cite{bonnabel2014contraction} to stochastic systems. Yet another
avenue would be to establish tight bounds on the time taken by
stochastic optimization algorithms -- such as the Stochastic Gradient
Descent widely used in machine learning -- to escape local
minima~\cite{kleinberg2018alternative}. Exploring such applications is
the subject of ongoing research.

\bibliographystyle{abbrv}
\bibliography{cri}

\end{document}